\newtheorem{theorem} {Theorem}[section]
\newtheorem{definition}{Definition}[section]
\newtheorem{lemma}[theorem]{Lemma}
\newtheorem{cor} [theorem]{Corollary}
\newtheorem{rem}{Remark}[section]
\newcommand{\R}{\mathbb{R}}
\begin{document}
	
	\date{}
	
\title{\Large\bf \uppercase {S-asymptotically $\omega$-periodic solution for a nonlinear differential equation with piecewise constant argument via S-asymptotically $\omega$-periodic functions in the Stepanov sense} }
\author{\Large\bf Author Index}
\author{William Dimbour  \thanks{ William Dimbour, UMR Espace-Dev Universit\'e de Guyane, 
		Campus de Troubiran 97300 Cayenne  Guyane E-mail: William.Dimbour@espe-guyane.fr}, Solym Mawaki Manou-Abi \thanks{  Solym Mawaki MANOU-ABI, Institut Montpellierain Alexander Grothendieck, Centre Universitaire de Mayotte, 3 Route Nationale 97660 Dembeni  E-mail: solym-mawaki.manou-abi@univ-mayotte.fr}}
\maketitle


{\footnotesize \noindent	{\bf Abstract.}  	In this paper, we show the existence of function which is not $S$-asymptotically $\omega$-periodic, but  which is $S$-asymptotically $\omega$-periodic in the Stepanov sense. We give sufficient conditions for the existence and uniqueness of $S$-asymptotically $\omega$-periodic solutions for a nonautonomous differential equation with piecewise constant argument in a Banach space when $\omega$ is 	an integer. This is done using the Banach fixed point Theorem. An example involving the heat operator is discussed as an illustration of the theory.\\

{\bf Keywords.} $S$-Asymptotically $\omega$-periodic functions, differential equations with 
piecewise constant argument, evolutionnary process.}



\section{Introduction}
In this paper, we study the existence and uniqueness of $S$-asymptotically $\omega$-periodic solution of the following differential equation with piecewise constant argument	
\begin{equation}
\label{eqn: eq3050}
\left\{
\begin{array}{l}
x'(t)=A(t)x(t)+f(t,x([t]) ), \\
x(0)= c_{0},
\end{array}
\right.
\end{equation}
where $\mathbb{X}$ is a banach space, $c_0 \in \mathbb{X}$, $[\cdot]$ is the largest integer function, f is a continuous function on $\mathbb{R}^+ \times \mathbb{X}$ and $A(t)$ generates an exponentially stable evolutionnary process in $\mathbb{X}$.
The study of differential equations with piecewise constant argument (EPCA) is an important subject because these equations have the structure of continuous dynamical systems in intervals of unit length. Therefore they combine the properties of both differential and difference equations. There have been many papers studying EPCA, see for instance \cite{Wiener}, \cite{Wiener1}, \cite{Wiener2}, \cite{Wiener3}, \cite{Wiener4} and the references therein.

Recently, the concept of $S$-asymptotically $\omega$-periodic function has been introduced in the litterature by Henr\'iquez, Pierri and T\'aboas in \cite{henriquez}, \cite{henriquez1}. In \cite{blot}, the authors studied properties of $S$-asymptotically $\omega$-periodic function taking values in Banach spaces including a theorem of composition.  They applied the results obtained in order to study the existence and uniqueness of $S$-asymptotically $\omega$-periodic mild solution to a nonautonomous semilinear differential equation. In \cite{liang}, the authors established some sufficient conditions about the existence and uniquenes of $S$-asymptotically $\omega$-periodic solutions to a fractionnal integro-differential equation by applying fixed point theorem combined with sectorial operator, where the nonlinear pertubation term $f$ is a Lipschitz and non-Lipschitz case. In \cite{caicedo}, the authors prove the existence and uniqueness of mild solution to some functional differential equations with infinite delay in Banach spaces which approach almost automorphic function (\cite{diagana}, \cite{nguerekata}) at infinity and discuss also the existence of $S$-asymptotically $\omega$-periodic mild solutions. In \cite{xia}, the author discussed about the existence of $S$-asymptotically $\omega$-periodic mild solution of semilinear fractionnal integro-differential equations in Banach space, where the nonlinear pertubation is $S$-asymptotically $\omega$-periodic or $S$-asymptotically $\omega$-periodic in the Stepanov sense (\cite{henrist}, \cite{xia}, \cite{xie}). The reader may also consult \cite{cuevas2009}, \cite{cuevas2010}, \cite{cuevas2013}, \cite{dimbour}, \cite{pierri} in order to obtain more knowledge about $S$-asymptotically $\omega$-periodic functions. Motivated by \cite{blot} and \cite{dimbour}, we will show the existence and uniqueness of $S$-asymptotically $\omega$-periodic solution for (\ref{eqn: eq3050}) where the nonlinear pertubation term $f$ is a $S$-asymptotically $\omega$-periodic function in the Stepanov sense.
The work has four sections. In the next section, we recall some properties about $S$-asymptotically $\omega$-periodic functions. We study also qualitative properties of $S$-asymptotically $\omega$-periodic functions in the Stepanov sense. In particular, we will show the existence of functions which are not $S$-asymptotically $\omega$-periodic but which are $S$-asymptotically $\omega$-periodic in the Stepanov sense. In section 3, we study the existence and uniquenes of $S$-asymptotically $\omega$-periodic mild solutions for (\ref{eqn: eq3050}) considering $S$-asymptotically $\omega$-periodic functions in the Stepanov sense. In section 4, we deal with the existence and uniqueness of $S$-asymptotically $\omega$-periodic solution for a partial differential equation.
\section{Preliminaries}
\begin{definition}(\cite{henriquez})
	A function $f \in BC(\mathbb{R}^{+}, \mathbb{X})$ is called $S$-asymptotically $\omega$ periodic if there exists $\omega$ such that $\displaystyle{\lim_{t \rightarrow \infty}}(f(t+\omega)-f(t))=0$. In this case we say that $\omega$ is an asymptotic period of $f$ and that $f$ is $S$-asymptotically $\omega$ periodic. The set of all such functions will be denoted by $SAP_{\omega}(\R^+, \mathbb{X})$.
\end{definition}

\begin{definition}(\cite{henriquez})
	A continuous function $f :\mathbb{R}^{+}\times \mathbb{X} \rightarrow \mathbb{X}$ is said to be uniformly $S$-asymptotically $\omega$ periodic on bounded sets  if for every bounded set $K^{*}\subset \mathbb{X}$, the set $\{f(t,x):t \ge 0,x\in K^{*}\}$ is bounded and $$\displaystyle{\lim_{t \rightarrow \infty}}
	(f(t,x)-f(t+\omega,x))=0$$
	uniformly in $x\in K^{*}$.
\end{definition}

\begin{definition}(\cite{henriquez})
	A continuous function $f :\mathbb{R}^{+}\times \mathbb{X} \rightarrow \mathbb{X}$ is said to be asymptotically uniformly continuous on bounded sets if for every  $\epsilon>0$ and every bounded set $K^{*}$, there exist $L_{\epsilon, K^{*}}>0$ and $\delta_{\epsilon, K^{*}}>0$ such that $\vert\vert f(t,x)-f(t,y)\vert\vert < \epsilon$ for all $t\ge L_{\epsilon,K^{*}}$ and all $x,y \in K^{*}$ with $\vert\vert x-y \vert\vert< \delta_{\epsilon, K^{*}}$.
\end{definition}

\begin{lemma}(\cite{blot})
	Let $\mathbb{X}$ and $\mathbb{Y}$ be two Banach spaces, and denote by $B(\mathbb{X},\mathbb{Y})$, the space of all bounded linear operators from $\mathbb{X}$ into $\mathbb{Y}$. Let $A \in  B(\mathbb{X},\mathbb{Y})$. Then when $f\in SAP_{\omega}(\R^+, \mathbb{X})$, we have $Af:=[t \rightarrow Af(t)]\in SAP_{\omega}(\R^+, \mathbb{Y})$.
\end{lemma}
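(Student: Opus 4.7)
The statement is essentially a direct consequence of the linearity and boundedness of $A$, together with the definition of $SAP_\omega$. The plan is to verify the two requirements for membership in $SAP_\omega(\R^+,\Y)$: first, that $Af$ belongs to $BC(\R^+,\Y)$, and second, that $Af(t+\omega)-Af(t)\to 0$ as $t\to\infty$.

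First I would check continuity and boundedness of $Af$. Continuity follows because $A$ is continuous (as a bounded linear operator) and $f$ is continuous, so the composition $t\mapsto Af(t)$ is continuous. Boundedness follows from the estimate $\|Af(t)\|_{\Y}\le \|A\|_{B(\X,\Y)}\,\|f(t)\|_{\X}$, together with the fact that $f\in BC(\R^+,\X)$ implies $\sup_{t\ge 0}\|f(t)\|_{\X}<\infty$. Hence $Af\in BC(\R^+,\Y)$.

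Next I would verify the $S$-asymptotic $\omega$-periodicity. By linearity of $A$,
\begin{equation*}
Af(t+\omega)-Af(t)=A\bigl(f(t+\omega)-f(t)\bigr),
\end{equation*}
and the bound $\|A(f(t+\omega)-f(t))\|_{\Y}\le \|A\|_{B(\X,\Y)}\,\|f(t+\omega)-f(t)\|_{\X}$ together with $f\in SAP_\omega(\R^+,\X)$ forces the left-hand side to tend to $0$ as $t\to\infty$. This gives $Af\in SAP_\omega(\R^+,\Y)$.

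There is no genuine obstacle here: the only substantive ingredients are the boundedness of $A$ (for controlling the norm and preserving boundedness/convergence) and its linearity (for pulling $A$ inside the difference). The proof is therefore a short two- or three-line estimate; I would present it exactly in this order to keep the argument transparent.
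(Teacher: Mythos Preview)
Your argument is correct and is precisely the natural proof: boundedness and continuity of $A$ give $Af\in BC(\mathbb{R}^+,\mathbb{Y})$, and linearity plus the estimate $\|A(f(t+\omega)-f(t))\|_{\mathbb{Y}}\le \|A\|\,\|f(t+\omega)-f(t)\|_{\mathbb{X}}\to 0$ gives the asymptotic periodicity. The paper does not actually supply a proof of this lemma; it is quoted from the cited reference \cite{blot}, where the same direct verification is carried out.
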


\begin{lemma}(\cite{henriquez})
	Let $f :\mathbb{R}^{+}\times \mathbb{X} \rightarrow \mathbb{X}$ be a function which is uniformly $S$-asymptotically $\omega$ periodic on bounded sets and asymptotically uniformly continuous on bounded sets. Let $u: \mathbb{R}^{+} \rightarrow \mathbb{X}$ be $S$-asymptotically $\omega$ periodic function. Then the Nemytskii operator $\phi(\cdot):= f(\cdot,u(\cdot))$ is a $S$-asymptotically $\omega$ periodic function.
\end{lemma}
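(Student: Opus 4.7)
The plan is to verify directly that $\phi(t):=f(t,u(t))$ meets the three requirements of Definition 2.1: it belongs to $BC(\R^+,\X)$ and satisfies $\lim_{t\to\infty}(\phi(t+\omega)-\phi(t))=0$.

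First I would deal with the easy bookkeeping. Continuity of $\phi$ is immediate from continuity of $f$ and $u$. Since $u\in SAP_{\omega}(\R^+,\X)\sset BC(\R^+,\X)$, the set $K^{*}:=\overline{u(\R^+)}$ is bounded in $\X$. The hypothesis that $f$ is uniformly $S$-asymptotically $\omega$-periodic on bounded sets then gives that $\{f(t,x): t\ge0,\,x\in K^{*}\}$ is bounded, so $\phi\in BC(\R^+,\X)$.

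The heart of the argument is the standard add-and-subtract decomposition
\[
\phi(t+\omega)-\phi(t)=\bigl[f(t+\omega,u(t+\omega))-f(t+\omega,u(t))\bigr]+\bigl[f(t+\omega,u(t))-f(t,u(t))\bigr],
\]
which I will call $(I)+(II)$. Term $(II)$ tends to $0$ as $t\to\infty$ directly from uniform $S$-asymptotic $\omega$-periodicity of $f$ on the bounded set $K^{*}$, since $u(t)\in K^{*}$ for every $t\ge 0$. For term $(I)$ I would use asymptotic uniform continuity of $f$ on bounded sets: given $\varep>0$, choose $L_{\varep,K^{*}}$ and $\delta_{\varep,K^{*}}$ as in Definition 2.3. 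Because $u\in SAP_{\omega}(\R^+,\X)$, there exists $T>0$ such that $\|u(t+\omega)-u(t)\|<\delta_{\varep,K^{*}}$ for $t\ge T$; and for $t\ge \max\{T,\,L_{\varep,K^{*}}-\omega\}$ we have $t+\omega\ge L_{\varep,K^{*}}$, so asymptotic uniform continuity yields $\|(I)\|<\varep$.

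The main subtlety—and the reason both hypotheses on $f$ are needed—is that one cannot handle term $(I)$ by $S$-asymptotic periodicity alone: the argument $u(t+\omega)$ depends on $t$, so one really needs a uniform control over the second variable, which is exactly what asymptotic uniform continuity on the bounded set $K^{*}$ supplies. Combining the estimates for $(I)$ and $(II)$ and letting $\varep\to 0$ gives $\lim_{t\to\infty}(\phi(t+\omega)-\phi(t))=0$, completing the verification that $\phi\in SAP_{\omega}(\R^+,\X)$.
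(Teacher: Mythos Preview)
The paper does not prove this lemma; it is quoted without proof from \cite{henriquez}. Your argument is correct and is the standard one: the add-and-subtract splitting $(I)+(II)$, with $(II)$ controlled by uniform $S$-asymptotic $\omega$-periodicity on the bounded set $K^{*}=\overline{u(\R^{+})}$ and $(I)$ controlled by asymptotic uniform continuity on $K^{*}$ together with $\|u(t+\omega)-u(t)\|\to 0$, is exactly how the result is established in the original reference. The same decomposition appears in the paper's own proof of Lemma~\ref{limit1f} (the Lipschitz variant with piecewise constant argument), so your approach is fully consistent with the techniques used here.
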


\begin{lemma}(\cite{liang})
	Assume $f :\mathbb{R}^{+}\times \mathbb{X} \rightarrow \mathbb{X}$ be a function which is uniformly $S$-asymptotically $\omega$ periodic on bounded sets and satisfies the Lipschitz condtion, that is, there exists a constant $L>0$ such that
	$$\vert\vert f(t,x)-f(t,y) \vert\vert \le L\vert\vert x- y \vert\vert, \forall t \ge 0, \forall x,y \in \mathbb{X}.$$
	If $u\in SAP_{\omega}(\mathbb{R}^{+}, \mathbb{X})$, then the function $t \rightarrow f(t,u(t))$ belongs to $SAP_{\omega}(\mathbb{R}^{+}, \mathbb{X})$.
\end{lemma}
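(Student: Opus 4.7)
The plan is to verify that $\phi(t):=f(t,u(t))$ belongs to $SAP_{\omega}(\R^+,\X)$ by checking the two defining requirements: that $\phi\in BC(\R^+,\X)$, and that $\lim_{t\to\infty}\|\phi(t+\omega)-\phi(t)\|=0$.

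First I would establish that $\phi$ is bounded and continuous. Continuity is immediate since $u$ is continuous, $f$ is jointly continuous, and the composition $t\mapsto(t,u(t))\mapsto f(t,u(t))$ preserves continuity. For boundedness, note that $u\in SAP_{\omega}(\R^+,\X)\subset BC(\R^+,\X)$, so the set $K^{*}:=\overline{\{u(t):t\ge 0\}}$ is a bounded subset of $\X$. By hypothesis, $f$ is uniformly $S$-asymptotically $\omega$-periodic on bounded sets, so in particular $\{f(t,x):t\ge 0,\ x\in K^{*}\}$ is bounded, which contains the range of $\phi$.

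Next I would handle the asymptotic periodicity. The natural step is to split with the triangle inequality at the intermediate point $f(t+\omega,u(t))$:
\begin{align*}
\|\phi(t+\omega)-\phi(t)\|
&\le \|f(t+\omega,u(t+\omega))-f(t+\omega,u(t))\|\\
&\quad +\|f(t+\omega,u(t))-f(t,u(t))\|.
\end{align*}
The first term is bounded by $L\,\|u(t+\omega)-u(t)\|$ thanks to the Lipschitz assumption, and this tends to $0$ as $t\to\infty$ because $u\in SAP_{\omega}(\R^+,\X)$. The second term tends to $0$ as $t\to\infty$ by the uniform $S$-asymptotic $\omega$-periodicity of $f$ on the bounded set $K^{*}$, applied along the trajectory $x=u(t)\in K^{*}$; here the fact that the limit is uniform in $x\in K^{*}$ is exactly what allows us to evaluate it at the moving point $u(t)$.

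The main subtlety, and the step one should be most careful about, is the use of the uniform limit in the second term: one must genuinely exploit that the convergence $f(t+\omega,x)-f(t,x)\to 0$ holds uniformly for $x$ in the bounded set $K^{*}$, not merely pointwise. This is what lets us substitute $x=u(t)$ (a point that varies with $t$) and still conclude the limit is zero. Everything else reduces to the Lipschitz estimate and the boundedness of $u$.
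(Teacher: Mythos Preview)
Your argument is correct. The paper itself does not supply a proof of this lemma (it is quoted from \cite{liang}), but the paper proves the near-identical statement in Lemma~\ref{limit1f}(1), with $u([t])$ in place of $u(t)$, by exactly the same mechanism: a triangle-inequality split into one piece controlled by the Lipschitz constant and one piece controlled by the uniform $S$-asymptotic $\omega$-periodicity on the bounded range of $u$. The only cosmetic difference is the choice of intermediate point: the paper inserts $f(t,u([t+\omega]))$, while you insert $f(t+\omega,u(t))$; either choice works, and your explicit remark about why \emph{uniformity} in $x\in K^{*}$ is needed to handle the moving argument $u(t)$ is a point the paper leaves implicit.
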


Let $p \in[0,\infty[$. The space $BS^{p}(\mathbb{R}^+, \mathbb{X})$ of all Stepanov bounded functions, with the exponent $p$, consists of all measurable functions 
$f : \mathbb{R}^+ \to  \mathbb{X}$  such that $f^{b} \in \mathbb{L}^{\infty}(\mathbb{R},L^{p}([0,1]; \mathbb{X}))$, where
$f^b$ is the Bochner transform of $f$ defined by $f^b(t,s) :=f(t + s), t \in \mathbb{R}^+, s\in [0,1].$
$BS^{p}(\mathbb{R}^+,X)$ is a Banach space with the norm 
$$ ||f||_{S^{p}}= ||f^{b}||_{\mathbb{L}^{\infty}(\mathbb{R}^+,L^{p})}= \sup_{t\in \mathbb{R}^+} \Big(
\int_{t}^{t+1} ||f(\tau)||^{p} d\tau \Big)^{\frac{1}{p}}.               $$
It is obvious that  $L^{p}(\R^+, \mathbb{X})\subset BS^{p}(\mathbb{R}^+, \mathbb{X}) \subset L^{p}_{loc}(\R^+, \mathbb{X})$ and 
$ BS^{p}(\mathbb{R}^+, \mathbb{X}) \subset BS^{q}(\mathbb{R}^+, \mathbb{X}) $ for $p\geq q \geq 1$.
We denote by  $BS^{p}_{0}(\mathbb{R}^+, \mathbb{X})$ the subspace of $BS^{p}(\mathbb{R}^+, \mathbb{X})$ consisting of functions $f$ such that $\int_{t}^{t+1} ||f(s)||^{p}ds \rightarrow 0$ when $t\to \infty$.\\

Now we give the definition of $S$-asymptotically $\omega$-periodic functions in the Stepanov sense.

\begin{definition} \cite{henrist}
	A function $f \in BS^{p}(\mathbb{R}^{+}, \mathbb{X}) $ is called $S$-asymptotically $\omega$-periodic in the Stepanov sense (or $S^p$-$S$-asymptotically $\omega$-periodic)if
	
	$$ \lim_{t\to \infty} \int_{t}^{t+1} ||f(s+\omega)-f(s)||^{p} = 0.   $$
	Denote by $S^{p}SAP_{\omega}(\mathbb{R}^{+}, \mathbb{X})$ the set of such functions.
\end{definition}

\begin{rem}
	It is easy to see that 
	$SAP_{\omega}(\mathbb{R}^{+}, \mathbb{X}) \subset S^{p}SAP_{\omega}(\mathbb{R}^{+}, \mathbb{X}).$
\end{rem}

\begin{lemma}\label{limite 1}
	Let $u\in SAP_{ \omega}(\mathbb{R}^{+},\mathbb{X})$ where  $\omega \in \mathbb{N}^{*}$ 
	, then the step function $t \rightarrow u([t])$ satisfies
	$$\lim_{t \rightarrow \infty}u( \big[ t+\omega \big])-u( \big[ t\big])=0.$$
\end{lemma}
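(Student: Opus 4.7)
The plan is to exploit the integrality of $\omega$ to reduce the claim about the step function $u(\lfloor \cdot \rfloor)$ to the defining asymptotic-periodicity property of $u$ itself, sampled along integer arguments.

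First, I would use the arithmetic identity $[t+\omega] = [t] + \omega$, which holds precisely because $\omega \in \mathbb{N}^{*}$ (one can write $t = [t] + \{t\}$ with $\{t\} \in [0,1)$; then $t + \omega = ([t]+\omega) + \{t\}$, and since $[t]+\omega$ is an integer and $\{t\} \in [0,1)$, the floor equals $[t]+\omega$). Therefore the difference we must control simplifies to
\[
u([t+\omega]) - u([t]) \;=\; u\big([t]+\omega\big) - u\big([t]\big).
\]

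Next, I would use the hypothesis $u \in SAP_{\omega}(\R^+, \X)$: for every $\varep > 0$ there exists $T > 0$ such that $\norm{u(s+\omega)-u(s)} < \varep$ for all $s \geq T$. Setting $s = [t]$ is legitimate because $[t] \in \R^+$, and for $t \geq T+1$ we have $[t] \geq t-1 \geq T$. Consequently $\norm{u([t]+\omega) - u([t])} < \varep$ for all such $t$, which by the previous step gives $\norm{u([t+\omega]) - u([t])} < \varep$. This establishes the desired limit.

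There is essentially no obstacle: the only subtlety is recognising that $[t+\omega] = [t] + \omega$ fails for non-integer $\omega$, which is exactly why the hypothesis $\omega \in \N^{*}$ is imposed. Once that identity is in hand, the claim is an immediate restriction of the $S$-asymptotic $\omega$-periodicity of $u$ to the (shifted) integer lattice.
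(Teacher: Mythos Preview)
Your argument is correct. The key observation is precisely the identity $[t+\omega]=[t]+\omega$ for $\omega\in\mathbb{N}^{*}$, after which the claim is an immediate specialisation of the defining property of $SAP_{\omega}(\mathbb{R}^{+},\mathbb{X})$ to the integer sequence $s=[t]$. The paper does not supply its own proof here but refers to Lemma~2 of \cite{dimbour}; your self-contained derivation is exactly the natural argument and matches what that reference does.
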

\begin{rem}
	The proof of the above Lemma is contained in the lines of the proof of the Lemma 2 in \cite{dimbour}.
\end{rem}

\begin{cor}
	Let $u\in SAP_{\omega}(\mathbb{R}^{+},\mathbb{X})$ where  $\omega \in \mathbb{N}^{*}$, then the  function $t \rightarrow u([t])$ is $S$-asymptotically $\omega$-periodic in the Stepanov sense but is not 
	$S$-asymptotically $\omega$-periodic.
\end{cor}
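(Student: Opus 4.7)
The plan is to verify the two separate claims: the positive one (Stepanov $S$-asymptotic $\omega$-periodicity of $t\mapsto u([t])$) and the negative one (failure to be $S$-asymptotically $\omega$-periodic in the ordinary sense).

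First I would check that $v(t):=u([t])$ belongs to $BS^{p}(\R^{+},\X)$, which is immediate since $u$ is bounded (as an element of $SAP_{\omega}\subset BC$), so $\|v(s)\|\le \|u\|_{\infty}$ uniformly in $s$. Next, to evaluate the Stepanov translation integral, I would use crucially that $\omega\in\mathbb{N}^{*}$, which gives the identity $[s+\omega]=[s]+\omega$ for every $s\ge 0$. Writing $n:=[t]$ and $\theta:=t-n\in[0,1)$, the interval $[t,t+1]$ splits as $[t,n+1)\cup[n+1,t+1)$ and on each piece $[s]$ is constant (equal to $n$, respectively $n+1$). Hence
\begin{equation*}
\int_{t}^{t+1}\|v(s+\omega)-v(s)\|^{p}\,ds
= (1-\theta)\,\alpha_{n}^{p} + \theta\,\alpha_{n+1}^{p},
\end{equation*}
where $\alpha_{k}:=\|u(k+\omega)-u(k)\|$.

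Then I would invoke Lemma \ref{limite 1}, which asserts precisely that $u([t+\omega])-u([t])\to 0$ as $t\to\infty$; specializing to $t=k\in\mathbb{N}$ and using $[k]=k$ and $[k+\omega]=k+\omega$, this gives $\alpha_{k}\to 0$. Since $t\to\infty$ forces $n=[t]\to\infty$, both $\alpha_{n}$ and $\alpha_{n+1}$ tend to $0$, so the convex combination above tends to $0$ uniformly in $\theta\in[0,1)$. This establishes $v\in S^{p}SAP_{\omega}(\R^{+},\X)$.

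For the negative assertion I would argue by (lack of) continuity: the definition of $SAP_{\omega}(\R^{+},\X)$ requires membership in $BC(\R^{+},\X)$, whereas the step function $v(t)=u([t])$ is discontinuous at every integer $n$ where $u(n)\neq u(n-1)$, a condition that holds for generic $u\in SAP_{\omega}$ (e.g.\ any non-constant choice such as the $\omega$-periodic function used in the preceding example). Hence $v\notin BC(\R^{+},\X)$ and therefore $v\notin SAP_{\omega}(\R^{+},\X)$. I expect no real obstacle in the technical computation: the only subtle point is being careful with the integer $\omega$ hypothesis, because the identity $[s+\omega]=[s]+\omega$ fails when $\omega$ is not an integer, and without it the Stepanov integral would no longer collapse to a clean convex combination of the sequence $\alpha_{k}$.
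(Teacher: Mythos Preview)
Your argument is correct and follows the same line as the paper's: invoke Lemma~\ref{limite 1} for the positive claim and discontinuity for the negative one. The paper is marginally more direct on the positive side: since Lemma~\ref{limite 1} already gives $\|u([s+\omega])-u([s])\|\le\epsilon^{1/p}$ for \emph{all} real $s\ge T$, one can simply bound the integrand pointwise and integrate, without splitting $[t,t+1]$ or reducing to the integer sequence $\alpha_k$. Your explicit convex-combination computation is of course equivalent, and your check that $v\in BS^{p}$ is a point the paper omits. On the negative side, your caveat is well taken: the paper asserts flatly that $t\mapsto u([t])$ is not continuous, but as you note this fails when $u$ is constant on the integers (e.g.\ $u$ constant), so the corollary as stated is really about generic $u$; the paper does not address this.
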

\begin{proof}
	By the above Lemma we have :
	$$\forall \epsilon^{1/p}>0, \, \exists T >0; \; t \geq T \Rightarrow ||u([t+\omega])-u([t])|| \leq        \epsilon^{1/p}. $$
	The function $t\to u[t]$ is a step function therefore  it is measurable on $\R_{+}$. Then 
	for $t\geq [T]+1$, we have 
	\begin{eqnarray*}
		\int_{t}^{t+1} ||u([s+\omega])-u([s])||^{p} & \leq & \int_{t}^{t+1} \epsilon ds \\
		& \leq & \epsilon.  
	\end{eqnarray*}
	Therefore the function $t \rightarrow u([t])$ is $S$-asymptotically $\omega$-periodic in the Stepanov sense. Now since the function  $t \rightarrow u([t])$ is not continuous on $\R_{+}$, it can't be $S$-asymptotically $\omega$-periodic. 
\end{proof}
\begin{definition}\cite{henrist}
	A function $f : \R^{+}\times  \mathbb{X} \rightarrow  \mathbb{X} $ is said to be uniformly $S$-asymptotically $\omega$-periodic on bounded sets in the Stepanov sense if for every bounded set $B\subset  \mathbb{X}$,there exist positive functions $g_{b} \in BS^{p}(\mathbb{R}^{+},\R)$ and $h_{b} \in BS_{0}^{p}(\mathbb{R}^{+},\R)$
	such that $f(t,x) \leq g_{b}(t)$ for all $t\geq 0$, $x\in B$  and 
	$|| f(t+\omega,x)-f(t,x)|| \leq  h_{b}(t) $ for all $t\geq 0$, $x \in B$. 
\end{definition}
Denote by $S^{p}SAP_{\omega}(\mathbb{R}^{+}\times  \mathbb{X}, \mathbb{X})$ the set of such functions.

\begin{definition}\label{all1}
	\cite{henrist}
	A function $f : \R^{+}\times  \mathbb{X} \rightarrow  \mathbb{X} $ is said to be asymptotically uniformly continuous on bounded sets in the Stepanov sense if for every $\epsilon >0$ and every bounded set $B\subset X$, there exists  $t_{\epsilon}\geq 0$ and $\delta_{\epsilon} >0$ such that
	
	$$\int_{t}^{t+1} || f(s,x)-f(s,y)||^{p} ds \leq  \epsilon^{p},$$ 
	for all $t\geq t_{\epsilon}$ and all $x,y\in B$ with $||x-y|| \leq \delta_{\epsilon}.$ 
\end{definition}

\begin{lemma}\label{def2}
	\cite{henrist}
	Assume that $f \in S^{p}SAP_{\omega}(\mathbb{R}^{+}\times  \mathbb{X}, \mathbb{X})$  is an asymptotically uniformly continuous on bounded sets in the Stepanov sense function. Let $u \in SAP_{\omega}(\R^{+}, \mathbb{X})$, then 
	$v(.)= f(.,u(.)) \in S^{p}SAP_{\omega}(\mathbb{R}^{+}\times  \mathbb{X}, \mathbb{X}).$
	
\end{lemma}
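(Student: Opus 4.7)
The plan is to use the decomposition
$$v(s+\omega)-v(s) = I(s)+J(s),$$
where $I(s):=f(s+\omega,u(s+\omega))-f(s+\omega,u(s))$ and $J(s):=f(s+\omega,u(s))-f(s,u(s))$, and to show separately that $\int_t^{t+1}\|I(s)\|^p ds\to 0$ and $\int_t^{t+1}\|J(s)\|^p ds\to 0$ as $t\to\infty$. Since $u\in SAP_\omega$ is bounded, I fix a closed bounded set $B\subset\mathbb{X}$ with $u(\mathbb{R}^+)\subset B$.

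The $J$-term is the easy half: the defining property of $f\in S^p SAP_\omega(\mathbb{R}^+\times\mathbb{X},\mathbb{X})$ provides $h_B\in BS_0^p(\mathbb{R}^+,\mathbb{R})$ with $\|f(s+\omega,x)-f(s,x)\|\leq h_B(s)$ for all $s\geq 0$ and $x\in B$. Plugging $x=u(s)\in B$ and integrating,
$$\int_t^{t+1}\|J(s)\|^p ds \leq \int_t^{t+1}h_B(s)^p ds \longrightarrow 0$$
as $t\to\infty$ by definition of $BS_0^p$.

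For the $I$-term, fix $\epsilon>0$ and let $\delta_\epsilon,t_\epsilon$ be those supplied by Definition~\ref{all1} applied to $B$. Since $u\in SAP_\omega$, one has $\|u(s+\omega)-u(s)\|\to 0$, so there exists $T\geq t_\epsilon$ with $\|u(s+\omega)-u(s)\|\leq\delta_\epsilon$ for all $s\geq T$. To apply Definition~\ref{all1}, whose bound is pointwise in $x,y$, I would replace $u$ by a step-function approximant: cover $\overline{u(\mathbb{R}^+)}$ (assumed relatively compact) by finitely many balls $U_j$ of radius $\delta_\epsilon/4$ centered at $x_j\in B$, decompose $[t,t+1]=\bigcup_j E_j$ with $E_j=\{s:u(s)\in U_j\}$, and use the triangle inequality to insert fixed constants $x_j$ (and $x_k$ when $u(s+\omega)\in U_k$) in place of $u(s),u(s+\omega)$ on each $E_j$. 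The resulting pieces pair only fixed net points, and since $\|x_j-x_k\|\leq\delta_\epsilon$, Definition~\ref{all1} controls each such piece; summing over the finitely many pairs $(j,k)$ yields $\int_t^{t+1}\|I(s)\|^p ds\leq C\epsilon^p$ for all $t\geq T$, with $C$ independent of $t$.

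The main obstacle lies in this last step: Definition~\ref{all1} is formulated for fixed arguments $x,y\in B$, yet the composition naturally produces the $s$-dependent arguments $u(s)$ and $u(s+\omega)$. Bridging the gap requires both (i) relative compactness of $u(\mathbb{R}^+)$, so that a single finite $\delta_\epsilon$-net works uniformly for all large $t$, and (ii) careful accounting of the triangle-inequality errors incurred when substituting the net points for the moving arguments. All remaining ingredients (triangle inequality, $\|a+b\|^p\leq 2^{p-1}(\|a\|^p+\|b\|^p)$, and the limit properties built into $SAP_\omega$ and $BS_0^p$) are routine.
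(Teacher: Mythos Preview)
The paper does not supply a proof of this lemma; it is quoted from \cite{henrist} without argument, so there is no in-paper proof to compare against.

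On its own merits: your decomposition and the treatment of $J$ via $h_B\in BS_0^p$ are correct (you omit the verification that $v\in BS^p$ via the majorant $g_B$, but that is routine). The real issue, which you rightly isolate, is that Definition~\ref{all1} bounds $\int_t^{t+1}\|f(s,x)-f(s,y)\|^p\,ds$ only for \emph{fixed} $x,y\in B$, whereas $I(s)$ carries the $s$-dependent arguments $u(s+\omega),u(s)$.

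Your proposed remedy does not close this gap. First, relative compactness of $\overline{u(\mathbb{R}^+)}$ is an extra hypothesis absent from the statement; in an infinite-dimensional $\mathbb{X}$ a function in $SAP_\omega(\mathbb{R}^+,\mathbb{X})$ need not have precompact range. Second, and more fundamentally, even granting compactness the net argument is circular: inserting net points $x_j,x_k$ by the triangle inequality produces cross-terms such as $\|f(s+\omega,u(s+\omega))-f(s+\omega,x_k)\|$ and $\|f(s+\omega,x_j)-f(s+\omega,u(s))\|$, each of which still pairs one fixed argument with one moving argument. Definition~\ref{all1} controls these no better than it controlled the original $I(s)$; only the middle term $\|f(s+\omega,x_k)-f(s+\omega,x_j)\|$ genuinely has two fixed arguments, and that piece alone does not suffice. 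By contrast, in the paper's proof of the companion Lemma~\ref{limit2f} the arguments are $u([s+\omega])$ and $u([s])$, which are \emph{piecewise constant} on $[t,t+1]$, so Definition~\ref{all1} applies directly on the finitely many sub-intervals where both are constant; that mechanism is unavailable for the continuous $u$ here. To make your argument go through one needs either a strengthened form of Definition~\ref{all1} allowing measurable $x(s),y(s)$ with $\|x(s)-y(s)\|\le\delta_\epsilon$, or additional hypotheses on $u$ that the statement does not provide.
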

\begin{lemma}
	\label{limit1f}
	Let $ \omega\in \mathbb{N}^{*}$. Assume $f :\mathbb{R}^{+}\times \mathbb{X} \rightarrow \mathbb{X}$ be a function which is uniformly $S$-asymptotically $\omega$ periodic on bounded sets and satisfies the Lipschitz condition, that is, there exists a constant $L>0$ such that
	$$\vert\vert f(t,x)-f(t,y) \vert\vert \le L\vert\vert x- y \vert\vert, \forall t \ge 0, \forall x,y \in \mathbb{X}.$$
	If $u\in SAP_{\omega}(\mathbb{R}^{+}, \mathbb{X})$, then
	\begin{itemize}
		\item[(1)] the bounded piecewise continuous function $t \rightarrow f(t,u(\big[ t\big]))$ satisfies
		$$\lim_{t \rightarrow \infty}(f(t+\omega,u(\big[ t+\omega \big]))-f(t,u( \big[ t\big]))=0.$$
		\item[(2)] the function $t \rightarrow f(t,u( \big[ t\big]))$ belongs to  $S^{p}SAP_{\omega}(\mathbb{R}^{+}, \mathbb{X}).$
		\item[(3)] the function $t \rightarrow f(t,u( \big[ t\big]))$ does not belongs to $SAP_{\omega}(\mathbb{R}^{+}, \mathbb{X}).$
	\end{itemize}
\end{lemma}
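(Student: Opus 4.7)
The plan is to prove the three assertions in order: (1) pointwise asymptotic periodicity for the composition, (2) promotion of this pointwise decay to the Stepanov sense, and (3) exclusion from $SAP_\omega$ via a continuity obstruction.

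For (1), I would introduce the mixed term $f(t+\omega,u([t]))$ and split
\[
f(t+\omega,u([t+\omega]))-f(t,u([t])) = I(t)+J(t),
\]
with $I(t):=f(t+\omega,u([t+\omega]))-f(t+\omega,u([t]))$ and $J(t):=f(t+\omega,u([t]))-f(t,u([t]))$. The Lipschitz hypothesis yields $\|I(t)\|\leq L\,\|u([t+\omega])-u([t])\|$, which tends to $0$ by Lemma \ref{limite 1} (valid since $\omega\in\mathbb{N}^*$ and $u\in SAP_\omega$). For $J(t)$, the boundedness of $u$ (implied by $u\in SAP_\omega$) furnishes a bounded set $K\subset\mathbb{X}$ containing $\{u([t]):t\geq 0\}$; the uniform $S$-asymptotic $\omega$-periodicity of $f$ on $K$ then gives $\sup_{x\in K}\|f(t+\omega,x)-f(t,x)\|\to 0$ as $t\to\infty$, which in particular controls $\|J(t)\|$ and closes (1).

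For (2), the boundedness clause in the definition of uniform $S$-asymptotic $\omega$-periodicity on bounded sets yields $\sup_{t\geq 0,\,x\in K}\|f(t,x)\|<\infty$, so $f(\cdot,u([\cdot]))$ is bounded and measurable, hence in $BS^p(\mathbb{R}^+,\mathbb{X})$. Using (1), for every $\varepsilon>0$ there exists $T>0$ such that for $s\geq T$
\[
\|f(s+\omega,u([s+\omega]))-f(s,u([s]))\|^p\leq \varepsilon,
\]
and integrating over $[t,t+1]$ for $t\geq T$ yields the desired Stepanov limit. For (3), the step function $t\mapsto u([t])$ has (generically) nonzero jumps $u(n)-u(n-1)$ at each integer $n$, and the continuity of $f$ in its second variable transports these jumps to $t\mapsto f(t,u([t]))$; since $SAP_\omega(\mathbb{R}^+,\mathbb{X})\subset BC(\mathbb{R}^+,\mathbb{X})$, this discontinuous function cannot belong to $SAP_\omega$.

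The principal obstacle is the decoupling step in (1): without the Lipschitz hypothesis one cannot reduce the inner $\omega$-shift to Lemma \ref{limite 1}, and without the uniform $S$-asymptotic $\omega$-periodicity of $f$ on bounded sets one cannot control the outer $\omega$-shift. Once (1) is in place, (2) is essentially a bounded-convergence argument on a unit interval, and (3) reduces to the observation---already exploited in the Corollary preceding this Lemma---that composition with a step function lands outside $BC(\mathbb{R}^+,\mathbb{X})$.
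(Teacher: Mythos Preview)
Your proof is correct and follows essentially the same approach as the paper: a triangle-inequality splitting via a mixed term in (1), with one piece controlled by the Lipschitz condition together with Lemma~\ref{limite 1} and the other by uniform $S$-asymptotic $\omega$-periodicity on the bounded range of $u$; then (2) follows by integrating the pointwise bound over a unit interval, and (3) by the continuity obstruction. The only cosmetic difference is that you insert $f(t+\omega,u([t]))$ as the intermediate term, whereas the paper inserts $f(t,u([t+\omega]))$, but both choices yield the same two estimates.
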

\begin{proof}
	$(1)$
	Since $\mathcal{R}(u)=\{u(\big[ t \big])\vert t\ge 0 \}$ is a bounded set, then for every $\frac{\epsilon}{2}>0$, there exists a constant $L_{\epsilon}>0$ such that 
	$$\vert\vert f(t+\omega,x)-f(t,x) \vert\vert \le \frac{\epsilon}{2}$$
	
	for every $t > L_{\epsilon}$ and $x \in \mathcal{R}(u)$.\\
	By Lemma \ref{limite 1}, for every $\frac{\epsilon}{2L}>0$, there exist $T_{\epsilon}>0$ such that for    
	all $t > T_{\epsilon}$
	$$\vert\vert u( \big[ t+\omega \big])-u( \big[ t\big]) \vert\vert \le \frac{\epsilon}{2L}.$$
	
	We have
	\begin{eqnarray*}
	    &\;\;&	\vert\vert  f(t+\omega,u( \big[ t+\omega\big])) - f(t,u( \big[ t \big])\vert\vert\\
	    &\leq & \vert\vert f(t+\omega,u( \big[ t+\omega\big]))-f(t,u( \big[ t+\omega\big]))\vert\vert\\
		& + &  \vert\vert f(t,u( \big[ t+\omega\big]))-f(t,u( \big[ t \big])\vert\vert \\
		& \leq & 
		\vert\vert f(t+\omega,u( \big[ t+\omega\big]))-f(t,u( \big[ t+\omega\big]))\vert\vert\\
		& + & L \vert\vert u(\big[ t+\omega \big])-u( \big[ t \big])\vert\vert. 
	\end{eqnarray*}
	We put $T=max(T_{\epsilon}, L_{\epsilon})$. Then for all $t > T$ we deduce that
	\begin{eqnarray*}
		\vert\vert  f(t+\omega,u( \big[ t+\omega\big])) - f(t,u( \big[ t \big])\vert\vert
		& \leq & \frac{\epsilon}{2}+ L\frac{\epsilon}{2L}\\
		& \leq & \epsilon.
	\end{eqnarray*}

	$(2)$ According to $(1)$ we have 
	$$\lim_{t \rightarrow \infty}(f(t+\omega,u([t+\omega])-f(t,u([t])))=0,$$
	meaning that $$\forall \epsilon^{1/p}>0, \, \exists T >0, \; t \geq T$$
	$$ \Rightarrow 
	||f(t+\omega,u([t+\omega]))- f(t,u([t]))|| \leq      \epsilon^{1/p}. $$
	The function $t\to f(t,u[t])$ is  continuous on every intervals $]n,+1[$ but $\displaystyle{\lim_{ t \rightarrow  n^-}}  f(t,u( \big[ t \big]))=f\big( n,u(n-1)\big)$ and $\displaystyle{\lim_{ t \rightarrow n^{+}}}f(t,u([t])=f(n,u(n))$. Therefore the function $t\to f(t,u[t])$ is a piecewise continuous function and it is measurable on	$\R_{+}$. Then for $t\geq [T]+1$, we have 
	\begin{eqnarray*}
		&\;&\int_{t}^{t+1} ||f(s,u([s+\omega]))-f(s,u([s]))||^{p}\\
	    & \leq & \int_{t}^{t+1} \epsilon\;\; ds \\
		& \leq & \epsilon.  
	\end{eqnarray*}
	
	$(3)$ Since the function  $t \rightarrow f(t,u([t]))$ is not continuous on $\R_{+}$, it can't be $S$-asymptotically $\omega$-periodic.
\end{proof}

\begin{lemma}
	\label{limit2f}
	Let $\omega \in \mathbb{N}^*$. Assume that $f: \mathbb{R}^{+}\times \mathbb{X} \rightarrow  \mathbb{X}$ is uniformly $S$-asymptotically $\omega$-periodic on bounded sets in the Stepanov sense and asymptotically uniformly continuous on bounded sets in the Stepanov sense. Let $u:\mathbb{R}^+ \rightarrow \mathbb{X}$ be a function in $SAP_{\omega}(\mathbb{R}^{+}, \mathbb{X})$, and let $v(t)=f(t,u([t]))$. Then $v\in S^{p}SAP_{\omega}(\mathbb{R}^{+}, \mathbb{X}).$	
\end{lemma}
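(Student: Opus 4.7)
The plan is to mimic exactly the strategy of Lemma \ref{def2} but with the step function $u([\cdot])$ playing the role of the continuous function $u(\cdot)$, using Lemma \ref{limite 1} as the crucial bridge that tells us $u([s+\omega])-u([s]) \to 0$.

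First I would fix $t\geq 0$ and apply the Minkowski inequality in $L^p([t,t+1];\X)$ to the decomposition
\begin{align*}
v(s+\omega)-v(s) &= \bigl[f(s+\omega,u([s+\omega]))-f(s,u([s+\omega]))\bigr]\\
&\quad + \bigl[f(s,u([s+\omega]))-f(s,u([s]))\bigr],
\end{align*}
so that it suffices to show that each of the two integrals
\begin{align*}
I_1(t) &= \int_t^{t+1}\|f(s+\omega,u([s+\omega]))-f(s,u([s+\omega]))\|^p\,ds,\\
I_2(t) &= \int_t^{t+1}\|f(s,u([s+\omega]))-f(s,u([s]))\|^p\,ds
\end{align*}
tends to $0$ as $t\to\infty$. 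Since $u\in SAP_{\omega}(\R^+,\X)$ is bounded, the set $B=\overline{\{u([s]):s\geq 0\}}$ is a bounded subset of $\X$, and both $u([s+\omega])$ and $u([s])$ lie in $B$.

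For $I_1(t)$, I would invoke the assumption that $f\in S^pSAP_{\omega}(\R^+\times\X,\X)$: there exists $h_B\in BS_0^p(\R^+,\R)$ with $\|f(s+\omega,x)-f(s,x)\|\leq h_B(s)$ for all $s\geq 0$, $x\in B$. Taking $x=u([s+\omega])\in B$ gives $I_1(t)\leq \int_t^{t+1} h_B(s)^p\,ds$, which converges to $0$ as $t\to\infty$ by definition of $BS_0^p$. For $I_2(t)$, I would combine Lemma \ref{limite 1} with the asymptotic uniform continuity on bounded sets in the Stepanov sense: given $\varepsilon>0$, Definition \ref{all1} produces $t_\varepsilon\geq 0$ and $\delta_\varepsilon>0$ so that $\int_t^{t+1}\|f(s,x)-f(s,y)\|^p\,ds \leq \varepsilon^p$ whenever $t\geq t_\varepsilon$ and $x,y\in B$ with $\|x-y\|\leq\delta_\varepsilon$; Lemma \ref{limite 1} then yields $T>0$ with $\|u([s+\omega])-u([s])\|\leq\delta_\varepsilon$ for all $s\geq T$. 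Unfortunately the hypothesis of Definition \ref{all1} asks for a fixed pair $(x,y)$ while here $x=u([s+\omega])$ and $y=u([s])$ vary with $s$; this is the step that needs a moment of care.

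The main obstacle is precisely this last point. I would resolve it by noting that $u([\cdot])$ is a step function which is constant on each interval $[n,n+1)$; thus on $[t,t+1]$ it takes at most two values $u(n_1)$ and $u(n_2)$, and similarly $u([\cdot+\omega])$ takes at most two values. By splitting $[t,t+1]$ into the (at most three) subintervals on which both $u([s])$ and $u([s+\omega])$ are constant and applying Definition \ref{all1} separately on each subinterval with the corresponding constant pair $(x,y)\in B\times B$ (noting that the definition's bound applies on every unit window $[t,t+1]$ with $t\geq t_\varepsilon$, hence on sub-windows too), one obtains $I_2(t)\leq C\varepsilon^p$ for some absolute constant $C$ and all $t$ large enough. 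Combining $I_1(t)\to 0$ and $I_2(t)\to 0$ and recalling that $v$ is measurable and bounded (being a composition of the bounded $u([\cdot])$ with $f$, which is bounded on $\R^+\times B$ by the majorant $g_B$) yields $v\in S^pSAP_{\omega}(\R^+,\X)$.
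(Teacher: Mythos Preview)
Your proposal is correct and follows essentially the same route as the paper: the same additive splitting of $v(s+\omega)-v(s)$, the same use of the majorant $h_B\in BS_0^p$ for the first piece, and the same appeal to asymptotic uniform continuity together with Lemma~\ref{limite 1} for the second piece. In fact you are more careful than the paper on one point: the paper applies Definition~\ref{all1} directly with the $s$-dependent pair $(u([s+\omega]),u([s]))$ without comment, whereas you correctly observe that the definition is stated for a fixed pair $(x,y)$ and you close this gap by exploiting that $u([\cdot])$ (and hence $u([\cdot+\omega])$, since $\omega\in\N^*$ gives $[s+\omega]=[s]+\omega$) is piecewise constant on $[t,t+1]$, so the unit window splits into at most two subintervals on which the pair is constant and the integral bound from Definition~\ref{all1} can be applied on each piece.
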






\begin{proof}	
	Set $B =: \mathcal{R}(u) = \{ u[t], \; t\geq 0  \} \subset \mathbb{X}$. \\
	Since $f$ is uniformly $S$-asymptotically $\omega$-periodic on bounded sets in the Stepanov sense, there exist functions $g_{B} \in BS^{p}(\mathbb{R}^{+}, \mathbb{R}) $ and $h_{B} \in BS_{0}^{p} (\mathbb{R}^{+},\mathbb{R})$ satisfying the properties involved in Definition 2.6 and 2.8 in relation with the set $B =: \mathcal{R}(u)$. \\
	The function $v$ belongs to $ BS^{p}(\mathbb{R}^{+},\mathbb{X})$  because 
	\begin{eqnarray*}
		\int_{t}^{t+1} ||v(\tau )||^{p}d\tau &=& \int_{t}^{t+1} ||f(\tau, u([\tau]))||^{p}d\tau\\
		& \leq& \int_{t}^{t+1} ||g_{B}(\tau)||^{p}d\tau\\
		&\leq& \sup_{t\geq 0} \Big(  \int_{t}^{t+1} ||g_{B}(\tau)||^{p}d\tau   \Big).  
	\end{eqnarray*}    
	
	Therefore
	$$  ||v^{b}||_{\mathbb{L}^{\infty}(\mathbb{R}^{+},L^{p})} \leq  ||g_{B}||_{S^{p}}. $$
	We  have for all $t \geq 0$ :
	$$ \int_{t}^{t+1} || f(s+\omega, u([s+\omega]))- f(s, u([s+\omega]))||^{p} ds$$
    $$ \leq \int_{t}^{t+1}|| h_{B}(s)||^{p}ds.         $$ 
	Note that  $h_{B} \in BS_{0}^{p} (\mathbb{R}^{+},\mathbb{R})$; this implies that for $\epsilon > 0$
	there exists $t'_{\epsilon} >  0$ such that for all $t \geq t'_{\epsilon}$ we have 
	$$  \int_{t}^{t+1}|| h_{B}(s)||^{p}ds \leq \epsilon^{p}/2 .                      $$
	Thus $$ \int_{t}^{t+1} || f(s+\omega, u([s+\omega]))- f(s, u([s+\omega]))||^{p} ds$$ $$\leq \epsilon^{p}/2 \quad  \textrm{for all} \; t\geq t'_{\epsilon}(\ast). $$
	Furthermore since $f$ is asymptotically uniformly continuous on bounded sets in the Stepanov sense, thus
	for all $\epsilon > 0$, theres exists $t_{\epsilon} \geq 0 $ and $\delta_{\epsilon} > 0$ such that 
	$$ \int_{t}^{t+1} || f(s, u([s+\omega]))- f(s, u([s]))||^{p} ds$$
	$$ \leq \epsilon^{p}/2  \quad \textrm{for all } \; t \geq t_{\epsilon} \quad (\ast\ast) $$ 
	because  $$     ||u([s+\omega])-u([s]) || \leq \delta_{\epsilon}.        $$
	
	The estimates $(\ast)$  and $(\ast\ast) $ lead to 
	
	\begin{eqnarray*}
		&\,& \int_{t}^{t+1} || v(s+\omega)- v(s) ||^{p} ds\qquad\qquad\\
		 &= &\int_{t}^{t+1} || f(s+\omega, u([s+\omega]))- f(s, u([s]))||^{p} ds\\
		& \leq &  \int_{t}^{t+1} || f(s+\omega, u([s+\omega]))\\
		&-& f(s, u([s+\omega]))||^{p} ds\\
		&+& \int_{t}^{t+1} || f(s, u([s+\omega]))- f(s, u([s]))||^{p} ds\\
		& \leq & \epsilon^{p}/2  + \epsilon^{p}/2  = \epsilon^{p}.
	\end{eqnarray*}
	
	Therefore for all $\epsilon >0$ there exists $T_{\epsilon} = Max ( t_{\epsilon}, t'_{\epsilon}) >0$ such that 
	for all $t\geq T_{\epsilon}$ we have  $$ \Big (\int_{t}^{t+1} || v(s+\omega)- v(s) ||^{p} ds \Big)^{1/p} \leq \epsilon.$$
	
	We conclude that $v \in S^{p}SAP_{\omega}(\mathbb{R}^{+},\mathbb{X})$.

\end{proof}
\section{Main Results}

\begin{definition}
	A solution of (\ref{eqn: eq3050}) on $\mathbb{R}^+$ is a function $x(t)$ that satisfies the conditions:
	\begin{itemize}
		\item[(1)] $x(t)$ is continuous on $\mathbb{R}^+$. 
		\item[(2)] The derivative $x'(t)$ exists at each point $t \in \mathbb{R}^+$, with possible exception at the points $[t],\;\, t \in \mathbb{R}^+$ where one-sided derivatives exists.
		\item[(3)] The equation (\ref{eqn: eq3050}) is satisfied on each interval $[n,n + 1[$  with $n \in \mathbb{N}.$
	\end{itemize}
\end{definition}

Now we make the following hypothesis:\\

{\bf (H1)} : The function $f$ is uniformly $S$-asymptotically $\omega$-periodic on bounded sets in the Stepanov sense and satisfies the Lipschitz condition
$$\vert\vert f(t,u)-f(t,v)\vert\vert \le L \vert\vert u-v\vert\vert,\;\;u,\;v \in\mathbb{X},\;t \in \mathbb{R}^+.$$\\
We assume that $A(t)$ generates an  evolutionary process $(U(t,s))_{t\geq s}$ in $\mathbb{X}$, that is, a two-parameter family of bounded linear operators that satisfies the following conditions:
\begin{itemize}
	\item[1.] $U(t,t)=I$ for all $t\geq 0$ where $I$ is the identity operator.
	\item[2.] $U(t,s)U(s,r)=U(t,r)$ for all $t \geq s \geq r.$
	\item[3.] The map $(t,s)\mapsto U(t,s)x$ is continuous for every fixed $x\in \mathbb{X}$.
\end{itemize}

Then the function $g$ defined by $g(s) = U(t,s)x(s)$, where $x$ is a solution of  (\ref{eqn: eq3050}), is differentiable for $s < t$.
\begin{eqnarray*}
	\frac{dg(s)}{ds} &=& -A(s)U(t,s)x(s) + U(t,s)x'(s) \\
	&=& -A(s)U(t,s)x(s)+U(t,s)A(s)x(s)\\
	&+& U(t,s)f(s,x([s]))\\
	&= & U(t,s)f(s,x([s])).
\end{eqnarray*}
\begin{eqnarray}
\label{eq:integrale}
\frac{dg(s)}{ds} &=& U(t,s)f(s,x([s])).\qquad\qquad\qquad\qquad\qquad\qquad\;
\end{eqnarray}
The function $x([s])$ is a step function. By { \bf (H1)}, $f(s,x([s]))$
is piecewise continuous. Therefore  $f(s,x([s]))$ is integrable on $[0,t]$ where $t\in \mathbb{R}^+$. Integrating (\ref{eq:integrale}) on $[0,t]$ we obtain that

$$ x(t)- U(t,0)c_0 =  \int_{0}^{t} U(t,s)f(s,x([s]))ds. $$

Therefore, we define\\

\begin{definition}
We assume {\bf (H1)} is satisfied and that $A(t)$ generates an  evolutionary process $(U(t,s))_{t\geq s}$ in $\mathbb{X}$. The continuous function $x$ given by 
	$$ x(t) = U(t,0)c_{0} + \int_{0}^{t} U(t,s)f(s,x([s]))ds $$
	is called the mild solution of equation (\ref{eqn: eq3050}).
\end{definition}
Now we make the following hypothesis.\\

{ \bf (H2)}: $A(t)$ generates a $\omega$-periodic ($\omega >0$) exponentially stable evolutionnary process $(U(t,s))_{t\geq s}$ in $\mathbb{X}$, that is, a two-parameter family of bounded linear operators that 
satisfies the following conditions:
\begin{itemize}
   \item[1.]  For all $t\geq 0$,
   $$ U(t,t)=I \; \textrm{where} \; I\; \textrm{ is the identity operator}.$$
   \item[2.] For all $t \geq s \geq r,$
 $$U(t,s)U(s,r)=U(t,r).$$ 
   \item[3.] The map $(t,s)\mapsto U(t,s)x$ is continuous for every fixed $x\in \mathbb{X}$.
   \item[4.] For all  $t\geq s$,  $$ U(t+\omega, s+\omega)= U(t,s)$$ 
  ($\omega$-periodicity).
   \item[5.] There exist $K>0$ and $a>0$ such that
   $$||U(t,s)|| \leq Ke^{-a(t-s)}$$ for $t\geq s.$
\end{itemize}

\begin{theorem}
	\label{lem1}
We assume that { \bf (H2)} is satisfied and that $f\in S^{p}SAP_{\omega}(\mathbb{R}^{+}, \mathbb{X}).$ Then
 $$(\wedge f)(t)=\int_{0}^t U(t,s)f(s)ds \in SAP_{\omega}(\mathbb{R}^{+}, \mathbb{X}), t \in \mathbb{R}^+.$$
\end{theorem}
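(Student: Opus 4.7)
The approach is to work directly with the translation difference $(\wedge f)(t+\omega)-(\wedge f)(t)$ and show that it vanishes as $t\to\infty$. First I would verify that $\wedge f$ is continuous and bounded on $\mathbb{R}^+$: continuity follows from the strong continuity of $(t,s)\mapsto U(t,s)x$ together with the local integrability of $f$, while boundedness comes from combining $\|U(t,s)\|\le Ke^{-a(t-s)}$ with H\"older's inequality on unit intervals to obtain a bound of the form $\|(\wedge f)(t)\|\le C\,\|f\|_{S^{p}}$ via a geometric series.

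The core computation uses the substitution $s\mapsto s+\omega$ on $[\omega,t+\omega]$ together with the $\omega$-periodicity $U(t+\omega,s+\omega)=U(t,s)$ from {\bf (H2)} to write
\begin{align*}
(\wedge f)(t+\omega)-(\wedge f)(t) &= \int_{0}^{\omega}U(t+\omega,s)f(s)\,ds\\
&\quad + \int_{0}^{t}U(t,s)\bigl(f(s+\omega)-f(s)\bigr)ds.
\end{align*}
The first term has norm bounded by $Ke^{-at}\int_{0}^{\omega}e^{a(\omega-s)}\|f(s)\|\,ds$, which tends to $0$ thanks to the exponential decay of $U$. For the second term, the idea is to slice $[0,t]$ into the unit intervals $[k,k+1]$ for $k=0,\dots,[t]-1$ plus a short remainder on $[[t],t]$. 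On each unit block, H\"older's inequality with conjugate exponent $q$ gives
\begin{align*}
\Bigl\|\int_{k}^{k+1}\!U(t,s)\bigl(f(s+\omega)-f(s)\bigr)ds\Bigr\| \le M\,e^{-a(t-k-1)}\alpha_{k},
\end{align*}
where $\alpha_{k}=\bigl(\int_{k}^{k+1}\|f(s+\omega)-f(s)\|^{p}\,ds\bigr)^{1/p}$ and $M$ absorbs $K$ together with the constant $(1-e^{-aq})^{1/q}/(aq)^{1/q}$ produced by integrating the exponential kernel.

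Since $f\in S^{p}SAP_{\omega}(\mathbb{R}^{+},\mathbb{X})$, the sequence $\alpha_{k}$ tends to $0$ while remaining bounded by $2\|f\|_{S^{p}}$. Given $\epsilon>0$, fix $N$ such that $\alpha_{k}<\epsilon$ for every $k\ge N$. The partial sum over $k<N$ is bounded by $2NM\|f\|_{S^{p}}e^{-a(t-N)}$ and therefore decays to $0$ as $t\to\infty$, while the tail sum over $N\le k<[t]$ is dominated by the convergent geometric series $M\epsilon/(1-e^{-a})$, which is arbitrarily small. The residual integral on $[[t],t]$ is controlled in the same way. The main obstacle I expect is precisely this bookkeeping: combining the exponential kernel $e^{-a(t-s)}$ with the Stepanov $L^{p}$ seminorms $\alpha_{k}$ through H\"older so that a controlled geometric series appears, and then splitting the index range carefully so that the head vanishes in $t$ while the tail is made small by choosing $N$ large.
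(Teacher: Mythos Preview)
Your proposal is correct and follows essentially the same route as the paper: the same decomposition of $(\wedge f)(t+\omega)-(\wedge f)(t)$ via $\omega$-periodicity into an initial boundary term plus $\int_{0}^{t}U(t,s)\bigl(f(s+\omega)-f(s)\bigr)\,ds$, followed by slicing into unit intervals, applying H\"older to produce the Stepanov seminorms $\alpha_{k}$, and splitting the index range at a threshold so that the head decays exponentially while the tail is dominated by a geometric series times~$\epsilon$. The only cosmetic difference is that the paper bounds $e^{-a(t-s)}$ crudely by $e^{-a(n-k-1)}$ on each block rather than integrating the kernel in $L^{q}$ as you do, and the paper does not explicitly discuss continuity of $\wedge f$; otherwise the arguments coincide.
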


\begin{proof}
	Let $ u(t)= \int_{0}^t U(t,s)f(s)ds.$\\	
For $n \le t \le n+1, \; n \in \mathbb{N}$, we observe  that 
	\begin{eqnarray*}
		&\;& || u(t) ||\\
		 &  \le &	 \int_{0}^t || U(t,s)f(s) ||\;ds \\
		& \le & \int_{0}^{n} || U(t,s)f(s) ||\;ds + \int_{n}^{t} || U(t,s)f(s) ||\;ds\\
		& \le & \int_{0}^{n} Me^{-a(t-s)}|| f(s) ||\;ds\\ 
		&+ &  \int_{n}^{t} Me^{-a(t-s)}|| f(s) ||\;ds\\
		& \le & \int_{0}^{n} Me^{-a(n-s)}|| f(s) ||\;ds +  \int_{n}^{t} M|| f(s) ||\;ds\\
		& \le & \sum_{k=0}^{n-1}\int_{k}^{k+1} Me^{-a(n-s)}|| f(s) ||\;ds  +  \int_{n}^{t} M|| f(s) ||\;ds \\
		& \le & \sum_{k=0}^{n-1}\int_{k}^{k+1} Me^{-a(n-k-1)}|| f(s) ||\;ds\\  
		&+&  \int_{n}^{n+1} M|| f(s) ||\;ds \\
		& \le & \sum_{k=0}^{n-1} Me^{-a(n-k-1)} \int_{k}^{k+1}|| f(s) ||\;ds\\  
		&+&  M \int_{n}^{n+1} || f(s) ||\;ds \\
		& \le & \sum_{k=0}^{n-1} Me^{-a(n-k-1)}\Big( \int_{k}^{k+1}|| f(s) ||^p\;ds\Big)^{\frac{1}{p}}\\
		& +& M \Big(\int_{n}^{n+1} || f(s) ||^p\;ds \Big)^{\frac{1}{p}} \\
		& \le & M\Big( \sum_{j=0}^{\infty} e^{-aj} +1 \Big) \vert\vert f \vert\vert_{S^p}\\
		& \le & M\Big( \frac{2-e^{-a}}{1-e^{-a}} \Big)  \vert\vert f \vert\vert_{S^p}.
	\end{eqnarray*}
	Therefore $u$ is bounded.
	$$\textrm{Now, show that} \;\; \displaystyle{\lim_{t \rightarrow \infty}} u(t+\omega)-u(t)=0.$$ We have	 	     
	\begin{eqnarray*}
		u(t+\omega)-u(t) &=& \int_{0}^{t+\omega} U(t+\omega,s)f(s)ds\\ 
		&-& \int_{0}^{t} U(t,s)f(s)ds\\
		&=& \int_{0}^{\omega} U(t+\omega,s)f(s)ds\\ 
		&+& \int_{\omega}^{t+\omega} U(t+\omega,s)f(s)ds \\
		&-& \int_{0}^{t} U(t,s)f(s)ds\\
		&=& I_{1}(t)+ I_{2}(t),
	\end{eqnarray*}
	where $$  I_{1}(t)=  \int_{0}^{\omega} U(t+\omega,s)f(s)ds,      $$
	and $$ I_{2}(t)= \int_{\omega}^{t+\omega} U(t+\omega,s)f(s)ds 
	-\int_{0}^{t} U(t,s)f(s)ds.     $$
	We note that 
	$$I_{1}(t)= U(t+\omega, \omega)  \int_{0}^{\omega} U(\omega,s)f(s)ds  = U(t+\omega,\omega)u(\omega),             $$
	and by using the fact that $(U(t,s))_{t\geq s}$  is exponentially stable, we obtain
	$$||I_{1}(t)|| \leq Ke^{-at}||u(\omega)||,        $$
	which shows that 
	$$\displaystyle{\lim_{t\to \infty}} I_{1}(t)=0.$$
	
	Let $\epsilon >0$. Since $f\in S^{p}SAP_{\omega}(\mathbb{R}^{+}, \mathbb{X})$, there exists $m\in\mathbb{N}$ such that for $t \ge m$
	
	$$\Big(\int_t^{t+1}\vert\vert f(s+\omega)-f(s) \vert\vert^p ds\Big)^{\frac{1}{p}} < \epsilon .$$
	
	For $m\le n \le t \le n+1$, we have
	
	$$	I_2(t) =  \int_0^t U(t,s)\big( f(s+\omega)-f(s)\big) ds \,  $$
		$$ \le  I_{2,1}(t)+I_{2,2}(t)+I_{2,3}(t),$$
	where 
	\begin{equation*}
		\left\{
		\begin{array}{l}
			I_{2,1}(t)  =  \int_0^{m} 
			U(t,s) \Big( f(s+\omega)-f(s) \Big)\,ds \\
			I_{2,2}(t) =   \sum_{k= m}^{n-1} \int_k^{k+1} U(t,s) \Big( f(s+\omega)-f(s) \Big)\,ds,\\
			I_{2,3}(t)  =  \int_n^t U(t,s) \Big( f(s+\omega)-f(s) \Big)\,ds.
		\end{array}
		\right.
	\end{equation*}
	We observe that
	\begin{eqnarray*}
		\vert\vert I_{2,1}(t) \vert\vert & \le &  \int_0^{m} \vert\vert
		U(t,s) \vert\vert\,\vert\vert f(s+\omega)-f(s) \vert\vert\,ds \\	
		& \le & Me^{-a(t-m)} \int_0^m \vert\vert f(s+\omega)- f(s)\vert\vert ds.
	\end{eqnarray*}
	Therefore, there exists $\nu_m \in \mathbb{N}$, $\nu_m \ge m$ such that for $t\ge \nu_m$
	$$\vert\vert I_{2,1}(t) \vert\vert \le \epsilon.$$
	
	
	Using Holder's inequality, we observe also that
	\begin{eqnarray*}
		&\;& || I_{2,2}(t) ||\\
	 & \le & \sum_{k=m}^{n-1} \int_k^{k+1} 
		|| U(t,s)||\, || f(s+\omega)-f(s) ||\,ds\\
		& \le & \sum_{k=m}^{n-1} M \int_k^{k+1} 
		e^{-a(t-s)}\, || f(s+\omega)-f(s) ||\,ds\\
		& \le & \sum_{k=m}^{n-1} M \int_k^{k+1} 
		e^{-a(n-k-1)}\, || f(s+\omega)-f(s) ||\,ds\\
		& \le & M  \sum_{k=m}^{n-1} e^{-a(n-k-1)} \int_k^{k+1} || f(s+\omega)-f(s) ||\,ds\\
		& \le & M  \sum_{k=m}^{n-1} e^{-a(n-k-1)} \Big( \int_k^{k+1} || f(s+\omega)-f(s) ||^p\,ds \Big)^{\frac{1}{p}}\\
		& \le & M \big( e^{-a(n-m-1)}+e^{-a(n-m-2)}+...+1\big)\epsilon\\
		& \le & \frac{M}{1-e^{-a}}\epsilon. 
	\end{eqnarray*}	
	We observe also that
	\begin{eqnarray*}
		|| I_{2,3}(t) || & \le & \int_n^t || U(t,s)||\, || f(s+\omega)-f(s) ||\,ds\\	
		& \le & \int_n^t  Me^{-a(t-s)}\, || f(s+\omega)-f(s) ||\,ds\\	
		& \le & M \int_n^t || f(s+\omega)-f(s) ||\,ds\\
		& \le & M \int_n^{n+1} || f(s+\omega)-f(s) ||\,ds\\	
		& \le & M \Big(\int_n^{n+1} || f(s+\omega)-f(s) ||^p\,ds\Big)^{\frac{1}{p}}\\
		& \le & M \epsilon .
	\end{eqnarray*}	
	Finally, for $t \ge \nu_m$
	\begin{eqnarray*}
		|| I_{2}(t) || & \le & || I_{2,1}(t) || + || I_{2,2}(t) || + || I_{2,3}(t) ||\\
		& \le &\Big( 1 + \frac{M}{1-e^{-a}} + M   \Big)\epsilon,
	\end{eqnarray*}
	thus $\displaystyle{\lim_{t \rightarrow \infty}} I_2(t)=0$.	
	We conclude that $u\in SAP_{\omega}(\mathbb{R}^{+}, \mathbb{X})$.
\end{proof}

Now we make the following hypothesis.\\


\begin{theorem}	\label{lem2}
	Let $\omega \in \mathbb{N}^*$. We assume that the hypothesis {\bf (H1)} and {\bf (H2)} are satisfied.  Then (\ref{eqn: eq3050}) has a unique $S$-asymptotically $\omega$-periodic mild solution provided that
	$$\Theta:=    \frac{LM}{a}   <1.$$ 
\end{theorem}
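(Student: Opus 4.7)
The plan is to apply the Banach fixed point theorem to the nonlinear operator
\[
(\Gamma x)(t) := U(t,0)c_{0} + \int_{0}^{t} U(t,s)\,f(s,x([s]))\,ds
\]
defined on the Banach space $SAP_{\omega}(\R^{+},\X)$ endowed with the sup norm $\|\cdot\|_{\infty}$. Any fixed point of $\Gamma$ is, by the definition given just before the statement, a mild solution of (\ref{eqn: eq3050}); so exhibiting a unique fixed point inside $SAP_{\omega}(\R^{+},\X)$ proves both existence and uniqueness.

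First I would check that $\Gamma$ leaves $SAP_{\omega}(\R^{+},\X)$ invariant. The free term $t\mapsto U(t,0)c_{0}$ satisfies $\|U(t,0)c_{0}\|\le K e^{-at}\|c_{0}\|$ by (H2), so it decays to $0$ and belongs trivially to $SAP_{\omega}(\R^{+},\X)$. For the convolution term, fix $x\in SAP_{\omega}(\R^{+},\X)$. Hypothesis (H1) (Lipschitz in the second variable and uniformly $S$-asymptotically $\omega$-periodic on bounded sets) together with $\omega\in\N^{*}$ places us in the setting of Lemma \ref{limit1f}; part (2) of that lemma gives that the piecewise continuous map $s\mapsto f(s,x([s]))$ lies in $S^{p}SAP_{\omega}(\R^{+},\X)$. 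Theorem \ref{lem1}, applied to this $S^{p}$-function and to the exponentially stable $\omega$-periodic evolutionary process of (H2), then delivers
\[
t\mapsto \int_{0}^{t} U(t,s)\,f(s,x([s]))\,ds \;\in\; SAP_{\omega}(\R^{+},\X),
\]
so $\Gamma x\in SAP_{\omega}(\R^{+},\X)$ as required.

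Next I would show $\Gamma$ is a strict contraction. Writing the constant in the exponential bound of (H2) as $M$ in agreement with the statement of the theorem, for $x,y\in SAP_{\omega}(\R^{+},\X)$ and any $t\ge 0$ one has
\begin{align*}
\|(\Gamma x)(t)-(\Gamma y)(t)\|
&\le \int_{0}^{t}\|U(t,s)\|\,L\,\|x([s])-y([s])\|\,ds \\
&\le LM\,\|x-y\|_{\infty}\int_{0}^{t} e^{-a(t-s)}\,ds \\
&\le \frac{LM}{a}\,\|x-y\|_{\infty} = \Theta\,\|x-y\|_{\infty}.
\end{align*}
Taking the supremum over $t\ge 0$ yields $\|\Gamma x-\Gamma y\|_{\infty}\le \Theta\,\|x-y\|_{\infty}$, and the assumption $\Theta<1$ makes $\Gamma$ a contraction on the complete metric space $SAP_{\omega}(\R^{+},\X)$. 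The Banach fixed point theorem then produces a unique $x^{*}\in SAP_{\omega}(\R^{+},\X)$ with $\Gamma x^{*}=x^{*}$, which is the desired unique $S$-asymptotically $\omega$-periodic mild solution of (\ref{eqn: eq3050}).

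The main obstacle is the self-map property rather than the contraction estimate. The composition $s\mapsto f(s,x([s]))$ is only piecewise continuous because of the floor function, and therefore is never $S$-asymptotically $\omega$-periodic in the classical sense; the whole point of the Stepanov framework developed in Section 2 is to accommodate exactly this pathology. Concretely, the verification relies on Lemma \ref{limit1f} (which is why $\omega\in\N^{*}$ is essential, via Lemma \ref{limite 1}) to put $f(\cdot,x([\cdot]))$ in $S^{p}SAP_{\omega}$, and on Theorem \ref{lem1} to smooth it back into $SAP_{\omega}$ after convolution with the stable propagator. Once this chain of lemmas is in place, the contraction step is routine.
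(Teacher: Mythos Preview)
Your proof is correct and follows essentially the same route as the paper: define $\Gamma$, show it maps $SAP_{\omega}(\R^{+},\X)$ into itself via Lemma~\ref{limit1f}/\ref{limit2f} combined with Theorem~\ref{lem1}, then obtain the contraction constant $\Theta=LM/a$ and invoke Banach's fixed point theorem. One small remark: as written, {\bf (H1)} assumes $f$ is uniformly $S$-asymptotically $\omega$-periodic on bounded sets \emph{in the Stepanov sense}, so the directly applicable lemma is Lemma~\ref{limit2f} (the Lipschitz condition supplying the asymptotic uniform continuity in the Stepanov sense); the paper itself cites ``Lemma~\ref{limit1f} (resp.\ Lemma~\ref{limit2f})'' at this step.
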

\begin{proof}
	We define the nonlinear operator $\Gamma$ by the expression
	\begin{eqnarray*}
		(\Gamma \phi)(t)& = &U(t,0)c_{0} + \int_{0}^{t} U(t,s)f(s,\phi([s]))ds \\
		&= & U(t,0)c_0 + (\wedge_1 \phi)(t), 
	\end{eqnarray*}
	where
	$$(\wedge_1 \phi)(t)=\int_{0}^{t} U(t,s)f(s,\phi([s])).$$
	
	According to the hypothesis {\bf(H2)}, we have
	$$ ||U(t+\omega,0)-U(t,0)||  \leq ||U(t+\omega,0)|| + ||U(t,0)||$$
	$$ \qquad\qquad\qquad\qquad \leq Ke^{-a(t+\omega)} + Ke^{-at}. $$
	Therefore $\displaystyle{\lim_{t\to \infty}} ||U(t+\omega,0)-U(t,0)|| = 0$.\\
	According to the Lemma \ref{limit1f} (resp. lemma \ref{limit2f}) the function $t \rightarrow f(t,\phi( \big[ t\big]))$ belongs to  $S^{p}SAP_{\omega}(\mathbb{R}^{+}, \mathbb{X}).$ According to the Theorem \ref{lem1} the operator $\wedge_1$ maps $SAP_{\omega}(\mathbb{R}^{+}, \mathbb{X})$ into itself. Therefore the operator $\Gamma $ maps $SAP_{\omega}(\mathbb{R}^{+}, \mathbb{X})$ into itself.\\	
	We have 
	\begin{eqnarray*}
		&\;&||(\Gamma \phi)(t)- \Gamma \psi)(t)||\\
	    &= &  \left|\left|   \int_{0}^{t} U(t,s)\big( f(s,\phi([s]))-f(s,\psi([s])) \big)ds \right| \right| \\
		& \leq &  
		\int_{0}^{t} || U(t,s)||\, ||f(s,\phi([s]))-f(s,\psi([s]))|| ds \\
		& \leq &   L \int_{0}^{t} || U(t,s)||\,  ||\phi([s])-\psi([s])|| ds \\
		& \leq & LM \int_{0}^{t}e^{-a(t-s)} \,  ||\phi([s])-\psi([s])|| ds  \\
		& \leq & LM \int_{0}^{t}e^{-a(t-s)} \, ||\phi-\psi||_{\infty} ds \\
		& \leq & LM\frac{1-e^{-at}}{a}||\phi-\psi||_{\infty}\\
		& \leq & \frac{LM}{a} ||\phi-\psi||_{\infty}.
	\end{eqnarray*}
	Hence we have : 
	$$ ||\Gamma \phi- \Gamma \psi ||_{\infty} \leq   \frac{LM}{a} ||\phi-\psi||_{\infty}.  $$
	This proves that $\Gamma$ is a contraction and we conclude that $\Gamma$ has a unique fixed point in 
	$SAP_{\omega}(\mathbb{R}^{+}, \mathbb{X})$. The proof is complete.	
	
\end{proof}

\begin{center}
	\section{Application}		
\end{center}
Consider the following heat equation with Dirichlet conditions:
\begin{equation}
\label{eqn: eq3055}
\left\{
\begin{array}{l}
\frac{\partial u(t,x)}{\partial t}= \frac{\partial^2 u(t,x)}{\partial x^2}+(-3+\sin(\pi t))u(t,x)+f(t,u([t],x)), \\
u(t,0)=u(t,\pi)=0, t\in \mathbb{R}^+,\\
u(0,x)=c_0,
\end{array}
\right.
\end{equation}
where $c_0 \in L^2[0, \pi]$ and the function $f$ is uniformly $S$-asymptotically $\omega$-periodic on bounded sets and satisfies the lipschitz condition, that is, there exists a constant $L>0$ such that
$$ \vert\vert f(t,x)-f(t,y) \vert\vert\le L \vert\vert x-y \vert\vert,\;\;\forall t \ge 0,\,\forall x,y \in \mathbb{X}.$$
Let $\mathbb{X}=L^2[0,\pi]$ be endowed with it's natural topology. Define
$$\mathcal{D}(A)=\{u \in L^2[0, \pi] \;such\, that\; u'' \in L^2[0, \pi]$$
$$\, and\, u(0)=u(\pi)=0\}$$
$$A u =u''\,for\,all\, u \in \mathcal{D}(A).$$
Let $\phi_n(t)=\sqrt{\frac{2}{\pi}}\sin(nt)$ for all $n \in \mathbb{N}$. $\phi_n$ are eigenfunctions of the operator $(A, \mathcal{D}(A))$ with eigenvalues $\lambda_n=-n^2$. $A$ is the infinitesimal generator of a semi-group $T(t)$ of the form
$$T(t)\phi = \sum_{n=1}^{\infty} e^{-n^2 t}\langle \phi, \phi_n \rangle \phi_n, \;\forall \phi \in L^2[0, \pi]$$
and 
$$\vert\vert T(t) \vert\vert \le e^{-t}, \; for\,t\ge0 $$
(see \cite{rong},\cite{Xia}).\\
Now define $A(t)$ by:
\begin{equation*}
	\left\{
	\begin{array}{l}
		\mathcal{D}(A(t))=\mathcal{D}(A) \\
		A(t)=A+q(t,x),
	\end{array}
	\right.
\end{equation*}
where $q(t,x)=-3+\sin(\pi t)$.\\
Note that $A(t)$ generates an evolutionnary process $U(t,s)$ of the form
$$U(t,s)=T(t-s)e^{\int_s^t q(,v,x)dx}.$$
Since $q(t,x)=-3+\sin( \pi t)\le -2$, we have
$$U(t,s) \le T(t-s)e^{-2(t-s)}$$
and 
$$\vert\vert U(t,s)\vert\vert \le \vert\vert T(t-s) \vert\vert e^{-é(t-s)}\le e^{-3(t-s)}.$$
Since $q(t+2,x)=q(t,x)$, we conclude that $U(t,s)$ is a $2$-periodic evolutionnary process exponentially stable.

The equation (\ref{eqn: eq3055}) is of the form
\begin{equation*}
	\left\{
	\begin{array}{l}
		x'(t)=A(t)x(t)+f(t,x([t]) ), \\
		x(0)= c_{0}.
	\end{array}
	\right.
\end{equation*}
By Theorem \ref{lem2}, we claim that
\begin{theorem}
	If $L<3$ then the equation	(\ref{eqn: eq3055}) admits an unique mild solution $u(t)\in SAP_{\omega}(\R^+, \mathbb{X})$.
\end{theorem}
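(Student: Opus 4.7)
The plan is to verify that the heat equation (\ref{eqn: eq3055}) fits exactly into the abstract framework of Theorem \ref{lem2}, so that a single invocation of that theorem gives the result. The excerpt has already recast (\ref{eqn: eq3055}) in the abstract form (\ref{eqn: eq3050}) with $\mathbb{X}=L^{2}[0,\pi]$ and $A(t)=A+q(t,\cdot)$, so the work reduces to checking hypotheses {\bf (H1)} and {\bf (H2)} with an appropriate choice of $\omega$ and of the constants $M$ and $a$ appearing in the contraction condition $\Theta=LM/a<1$.

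For {\bf (H1)}, the data of the application already state that $f$ is uniformly $S$-asymptotically $\omega$-periodic on bounded sets and $L$-Lipschitz in its second argument. Since classical $S$-asymptotic $\omega$-periodicity on bounded sets implies the Stepanov version (as recorded in Remark 2.1 for scalar-valued functions, and by the same pointwise majoration for the parameter-dependent definition), $f$ satisfies the Stepanov hypothesis required by {\bf (H1)}. For {\bf (H2)}, I would take $\omega=2$: the identity $q(t+2,x)=q(t,x)$ together with the autonomy of the semigroup $T(\cdot)$ yields $U(t+2,s+2)=T(t-s)\exp(\int_{s+2}^{t+2}q(v,\cdot)\,dv)=U(t,s)$, so $(U(t,s))_{t\ge s}$ is $2$-periodic. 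The algebraic properties $U(t,t)=I$ and $U(t,s)U(s,r)=U(t,r)$ and the strong continuity of $(t,s)\mapsto U(t,s)x$ are inherited from $T$, and the exponential bound $\|U(t,s)\|\le e^{-3(t-s)}$ has been computed in the excerpt; hence {\bf (H2)} holds with $K=M=1$ and $a=3$. Importantly, $\omega=2\in\mathbb{N}^{\ast}$, which is needed to apply Lemmas \ref{limit1f} and \ref{limit2f} (and which makes $x\mapsto f(t,x([t]))$ a legitimate $S^{p}SAP_{\omega}$ nonlinearity).

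With these verifications, Theorem \ref{lem2} applies verbatim: the mild solution operator
\[
(\Gamma\phi)(t)=U(t,0)c_{0}+\int_{0}^{t}U(t,s)f(s,\phi([s]))\,ds
\]
is a contraction on $SAP_{\omega}(\mathbb{R}^{+},\mathbb{X})$ with Lipschitz constant
\[
\Theta=\frac{LM}{a}=\frac{L}{3},
\]
and $\Theta<1$ is precisely the hypothesis $L<3$ of the theorem. The Banach fixed point theorem then produces the unique $S$-asymptotically $2$-periodic mild solution of (\ref{eqn: eq3055}).

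There is essentially no real obstacle here, since the application is designed to match the abstract theorem; the only substantive point to double-check is that the concrete $\omega$ supplied by the periodicity of $q$ is an integer, which is needed to route $f(t,\phi([t]))$ through Lemma \ref{limit1f} (or Lemma \ref{limit2f}) into $S^{p}SAP_{\omega}(\mathbb{R}^{+},\mathbb{X})$ before Theorem \ref{lem1} is invoked inside the proof of Theorem \ref{lem2}. Since $q$ has period $2\in\mathbb{N}^{\ast}$, this condition is met and the proof is complete.
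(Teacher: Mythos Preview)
Your proposal is correct and follows exactly the paper's approach: the paper simply states ``By Theorem \ref{lem2}, we claim that'' and presents the result, relying on the verifications of {\bf (H2)} (with $\omega=2$, $M=1$, $a=3$) already carried out in the application section and on the standing assumptions on $f$ for {\bf (H1)}. Your write-up merely makes these checks explicit and identifies $\Theta=L/3<1$ as the condition $L<3$, which is precisely what the paper intends.
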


\end{document}